\numberwithin{equation}{section}
\theoremstyle{plain}
\newtheorem{thm}{Theorem}[section]
\theoremstyle{remark}
\newtheorem{rem}{Remark}[section]
\newcommand{\td}{\textup{d}}
\begin{document}

\title[Identities involving central binomial coefficients]
{Several combinatorial identities derived from series expansions of powers of arcsine}

\author[F. Qi]{Feng Qi}
\address{Institute of Mathematics, Henan Polytechnic University, Jiaozuo 454010, Henan, China; College of Mathematics, Inner Mongolia University for Nationalities, Tongliao 028043, Inner Mongolia, China; School of Mathematical Sciences, Tianjin Polytechnic University, Tianjin 300387, China}
\email{\href{mailto: F. Qi <qifeng618@gmail.com>}{qifeng618@gmail.com}, \href{mailto: F. Qi <qifeng618@hotmail.com>}{qifeng618@hotmail.com}, \href{mailto: F. Qi <qifeng618@qq.com>}{qifeng618@qq.com}}
\urladdr{\url{https://qifeng618.wordpress.com}, \url{https://orcid.org/0000-0001-6239-2968}}

\author{Chao-Ping Chen}
\address{School of Mathematics and Informatics, Henan Polytechnic University, Jiaozuo 454010, Henan, China}
\email{chenchaoping@sohu.com}

\author[D. Lim]{Dongkyu Lim}
\address{Department of Mathematics Education, Andong National University, Andong 36729, Republic of Korea}
\email{\href{mailto: D. Lim <dgrim84@gmail.com>}{dgrim84@gmail.com}, \href{mailto: D. Lim <dklim@andong.ac.kr>}{dklim@andong.ac.kr}}
\urladdr{\url{http://orcid.org/0000-0002-0928-8480}}

\dedicatory{Dedicated to people facing and battling COVID-19}

\begin{abstract}
In the paper, with the aid of the series expansions of the square or cubic of the arcsine function, the authors establish several possibly new combinatorial identities containing the ratio of two central binomial coefficients which are related to the Catalan numbers in combinatorial number theory.
\end{abstract}

\keywords{identity; product; ratio; central binomial coefficient; power series expansion; arcsine function; square; cubic; generating function; Catalan number}

\subjclass[2010]{Primary 05A10, 11B65; Secondary 05A15, 11B83, 26A09, 41A58}

\thanks{This paper was typeset using\AmS-\LaTeX}

\maketitle
\tableofcontents

\section{Introduction}
The sequence of central binomial coefficients $\binom{2n}{n}$ for $n\ge0$ is classical, simple, and elementary. This sequence has attracted many mathematicians who have published a number of papers such as~\cite{Alzer-Nagy-Integers-2020, Boyad-JIS-2012, John-Maxwell-Rocky-2019, Chen-HW-IJS-2016, Gar, Gu-Guo-BAustMS-2020, Jovan-Mikic-2020, Spivey-art-2019, Sprugnoli-INT-2006} and closely related references therein. It is worth to mentioning that, the integral representation
\begin{equation*}
\binom{2n}{n}=\frac{1}{\pi}\int_{0}^{\infty}\frac{1}{(1/4+s^2)^{n+1}}\td s
\end{equation*}
was derived in~\cite[Section~4.2]{Catalan-Gen-Int-Formula.tex}.
\par
In this paper, with the help of the power series expansion
\begin{equation}\label{arcsin-power-series}
\arcsin x=\sum_{\ell=0}^{\infty}\frac{1}{2^{2\ell}}\binom{2\ell}{\ell}\frac{x^{2\ell+1}}{2\ell+1}, \quad |x|<1,
\end{equation}
see~\cite[4.4.40]{abram} and~\cite[p.~121, 6.41.1]{Adams-Hippisley-Smithsonian1922}, the series expansion
\begin{equation}\label{Lehmer-Monthly-1985-arcsin-square-expan}
(\arcsin x)^2=\frac{1}{2}\sum_{\ell=1}^{\infty}\frac{(2x)^{2\ell}}{\ell^2\binom{2\ell}{\ell}}, \quad |x|<1,
\end{equation}
which or its variants can be found in~\cite[p.~122, 6.42.1]{Adams-Hippisley-Smithsonian1922}, \cite[pp.~262--263, Proposition~15]{Berndt-Ramanujan-B-I}, \cite[pp.~50--51 and p.~287]{Borwein-Bailey-Girgensohn-Experim-2004}, \cite[p.~384]{Borwein-2-book-87}, \cite[Lemma~2]{Chen-CP-ITSF-2012}, \cite[pp.~88--90]{Edwards-1982-DC}, \cite[p.~61, 1.645]{Gradshteyn-Ryzhik-Table-8th}, \cite[p.~308]{Kalmykov-Sheplyakov-2001}, \cite[p.~453]{Lehmer-Monthly-1985}, \cite[Section~6.3]{Catalan-Int-Surv.tex}, \cite[p.~59, (2.56)]{Wilf-GF-2006-3rd}, or~\cite[p.~676, (2.2)]{Zhang-Chen-JMI-2020},
and the power series expansion
\begin{equation}\label{arcsin-cubic-series-expan}
(\arcsin x)^3=3!\sum_{\ell=0}^{\infty}[(2\ell+1)!!]^2\Biggl[\sum_{k=0}^{\ell}\frac{1}{(2k+1)^2}\Biggr]\frac{x^{2\ell+3}}{(2\ell+3)!}, \quad |x|<1,
\end{equation}
which or its variants can be found in~\cite[p.~122, 6.42.2]{Adams-Hippisley-Smithsonian1922}, \cite[pp.~262--263, Proposition~15]{Berndt-Ramanujan-B-I}, \cite[p.~188, Example~1]{Bromwich-1908}, \cite[pp.~88--90]{Edwards-1982-DC}, \cite[p.~61, 1.645]{Gradshteyn-Ryzhik-Table-8th}, or~\cite[p.~308]{Kalmykov-Sheplyakov-2001}, we will establish several identities involving the product $\binom{2k}{k}\binom{2(n-k)}{n-k}$ or the ratio $\frac{\binom{2k}{k}}{\binom{2(n-k+1)}{n-k+1}}$ of two central binomial coefficients $\binom{2k}{k}$ and $\binom{2(n-k)}{n-k}$ for $0\le k\le n$.

\section{Two alternative proofs of a known combinatorial identity}

In this section, by means of the series expansions~\eqref{arcsin-power-series} and~\eqref{Lehmer-Monthly-1985-arcsin-square-expan}, we give two alternative proofs of a known combinatorial identity.

\begin{thm}[{\cite[p.~77, (3.96)]{Sprugnoli-Gould-2006}}]\label{Catalan-Chen-ID-thm}
For $n\ge0$, we have
\begin{equation}\label{Catalan-Chen-ID}
\sum_{k=0}^{n}\frac{1}{2k+1}\binom{2k}{k}\binom{2(n-k)}{n-k}=\frac{2^{4n}}{(2n+1)\binom{2n}{n}}.
\end{equation}
\end{thm}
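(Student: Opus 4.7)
The plan is to give two distinct proofs, each exploiting one of the power series recalled in the introduction.

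\emph{First approach.} I would term-wise differentiate \eqref{arcsin-power-series} to recover the companion expansion
$$\frac{1}{\sqrt{1-x^2}}=\sum_{\ell=0}^{\infty}\frac{1}{2^{2\ell}}\binom{2\ell}{\ell}x^{2\ell},$$
and term-wise differentiate \eqref{Lehmer-Monthly-1985-arcsin-square-expan} to obtain, after dividing by $2$,
$$\frac{\arcsin x}{\sqrt{1-x^2}}=\sum_{\ell=1}^{\infty}\frac{2^{2\ell-1}}{\ell\binom{2\ell}{\ell}}x^{2\ell-1}.$$
The Cauchy product of \eqref{arcsin-power-series} with the first of these two series then reads off the coefficient of $x^{2n+1}$ in $\frac{\arcsin x}{\sqrt{1-x^2}}$ as $\frac{1}{2^{2n}}\sum_{k=0}^{n}\frac{1}{2k+1}\binom{2k}{k}\binom{2(n-k)}{n-k}$, while the second expansion supplies the closed form $\frac{2^{2n+1}}{(n+1)\binom{2n+2}{n+1}}$ for the same coefficient. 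Equating the two sides and applying the elementary reduction $(n+1)\binom{2n+2}{n+1}=2(2n+1)\binom{2n}{n}$ yields \eqref{Catalan-Chen-ID}.

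\emph{Second approach.} Alternatively, I would square the series \eqref{arcsin-power-series} and compare with \eqref{Lehmer-Monthly-1985-arcsin-square-expan} itself. The Cauchy product writes the coefficient of $x^{2n+2}$ in $(\arcsin x)^2$ as $\frac{1}{2^{2n}}\sum_{k=0}^{n}\frac{\binom{2k}{k}\binom{2(n-k)}{n-k}}{(2k+1)(2(n-k)+1)}$, which \eqref{Lehmer-Monthly-1985-arcsin-square-expan} identifies with $\frac{2^{2n+1}}{(n+1)^2\binom{2n+2}{n+1}}$. Averaging the target sum in \eqref{Catalan-Chen-ID} against its image under the involution $k\leftrightarrow n-k$ and using the partial-fraction identity $\frac{1}{2k+1}+\frac{1}{2(n-k)+1}=\frac{2(n+1)}{(2k+1)(2(n-k)+1)}$ expresses the target sum as $(n+1)$ times the symmetric sum just evaluated; the same central-binomial simplification as above closes the argument.

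The substance of both proofs lies in a routine coefficient comparison, so the only delicate points are bookkeeping: tracking the powers of $2$ and the index shifts in the central binomial factors, and recognising the symmetrisation trick in the second approach (without which $(\arcsin x)^2$ produces a sum weighted by $\frac{1}{(2k+1)(2(n-k)+1)}$ rather than the sum appearing in \eqref{Catalan-Chen-ID}). I expect no genuine obstacle beyond keeping these factors straight.
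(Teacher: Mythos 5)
Both of your arguments are correct; I checked the coefficient extractions and the final simplification $(n+1)\binom{2n+2}{n+1}=2(2n+1)\binom{2n}{n}$, and everything balances, including the powers of $2$. Your first approach is essentially the paper's own proof: the paper computes $\frac{\arcsin(2x)}{2x}\frac{1}{\sqrt{1-4x^2}}$ in two ways (once as the Cauchy product of the series for $\arcsin$ and its derivative, once from the differentiated form of the $(\arcsin x)^2$ expansion \eqref{Lehmer-Monthly-1985-arcsin-square-expan}), which is exactly your coefficient comparison for $\frac{\arcsin x}{\sqrt{1-x^2}}$ up to the cosmetic rescaling $x\mapsto 2x$. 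Your second approach, however, is a genuinely different and arguably cleaner route: you square \eqref{arcsin-power-series} and compare directly with \eqref{Lehmer-Monthly-1985-arcsin-square-expan}, which produces the symmetric sum weighted by $\frac{1}{(2k+1)(2(n-k)+1)}$, and then you recover the asymmetric target sum via the involution $k\leftrightarrow n-k$ together with the partial-fraction identity $\frac{1}{2k+1}+\frac{1}{2(n-k)+1}=\frac{2(n+1)}{(2k+1)(2(n-k)+1)}$. This variant needs no differentiation and no auxiliary generating function $\frac{\arcsin x}{\sqrt{1-x^2}}$, at the cost of the symmetrisation step; the paper's route avoids that step and has the advantage of reusing the same auxiliary function and differentiated expansions that drive its Theorem~\ref{3identities-ratio=thm}. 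As a bonus, your symmetric intermediate identity
\begin{equation*}
\sum_{k=0}^{n}\frac{1}{(2k+1)(2(n-k)+1)}\binom{2k}{k}\binom{2(n-k)}{n-k}=\frac{2^{4n+1}}{(n+1)^2\binom{2n+2}{n+1}}=\frac{2^{4n}}{(n+1)(2n+1)\binom{2n}{n}}
\end{equation*}
is itself a clean identity of the same family, which the paper does not state.
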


\begin{proof}[First proof]
From~\eqref{arcsin-power-series}, it follows that
\begin{equation*}
\frac{1}{2}\arcsin(2x)=\sum_{k=0}^{\infty}\frac{1}{2k+1}\binom{2k}{k}x^{2k+1},\quad |x|<\frac{1}{2}
\end{equation*}
and, by differentiation,
\begin{equation*}
\frac{1}{\sqrt{1-4x^2}\,}=\sum_{k=0}^{\infty}\binom{2k}{k}x^{2k},\quad |x|<\frac{1}{2}.
\end{equation*}
Therefore, we obtain
\begin{equation}
\begin{aligned}\label{arcsin-sqrt-cauchy-prod}
\frac{\arcsin(2x)}{2x}\frac{1}{\sqrt{1-4x^2}\,}
&=\Biggl[\sum_{k=0}^{\infty}\frac{1}{2k+1}\binom{2k}{k}x^{2k}\Biggr] \Biggl[\sum_{k=0}^{\infty}\binom{2k}{k}x^{2k}\Biggr]\\
&=\sum_{n=0}^{\infty}\Biggl[\sum_{k=0}^n \frac{1}{2k+1}\binom{2k}{k}\binom{2(n-k)}{n-k}\Biggr]x^{2n}.
\end{aligned}
\end{equation}
On the other hand, by virtue of the series expansion~\eqref{Lehmer-Monthly-1985-arcsin-square-expan}, we acquire
\begin{equation}
\begin{aligned}\label{arcsin-sqrt-squre-expan}
\frac{\arcsin(2x)}{2x}\frac{1}{\sqrt{1-4x^2}\,}
&=\frac{1}{8x}\frac{\td}{\td x}\bigl([\arcsin(2x)]^2\bigr)\\
&=\frac{1}{8x}\frac{\td}{\td x}\sum_{n=0}^{\infty}\frac{2^{2n+1}(n!)^2}{(2n+2)!}(2x)^{2n+2}\\
&=\frac{1}{8x}\sum_{n=0}^{\infty}\frac{2^{2n+2}(n!)^2}{(2n+1)!}(2x)^{2n+1}\\
&=\sum_{n=0}^{\infty}\frac{2^{4n}(n!)^2}{(2n+1)!}x^{2n}.
\end{aligned}
\end{equation}
Comparing~\eqref{arcsin-sqrt-cauchy-prod} with~\eqref{arcsin-sqrt-squre-expan} and equating coefficients of $x^{2n}$, we obtain
\begin{equation*}
\sum_{k=0}^n \frac{1}{2k+1}\binom{2k}{k}\binom{2(n-k)}{n-k}
=\frac{2^{4n}(n!)^2}{(2n+1)!}
=\frac{2^{4n}}{(2n+1)\binom{2n}{n}}.
\end{equation*}
The identity~\eqref{Catalan-Chen-ID} is thus proved.
The first proof of Theorem~\ref{Catalan-Chen-ID-thm} is complete.
\end{proof}

\begin{proof}[Second proof]
Differentiating on both sides of~\eqref{Lehmer-Monthly-1985-arcsin-square-expan} and rearranging give
\begin{equation}\label{Lehmer-Monthly-1985-Gen}
\frac{2x\arcsin x}{\sqrt{1-x^2}\,} =\sum_{\ell=1}^{\infty}\frac{(2x)^{2\ell}}{\ell\binom{2\ell}{\ell}}, \quad |x|<1,
\end{equation}
which or its variants can also be found in~\cite[p.~122, 6.42.5]{Adams-Hippisley-Smithsonian1922}, \cite[p.~384]{Borwein-2-book-87}, \cite[p.~161]{Bradley-Ramanujan-1999}, \cite[p.~452, Theorem]{Lehmer-Monthly-1985}, and~\cite[Section~6.3, Theorem~21, Sections~8 and~9]{Catalan-Int-Surv.tex}.
Replacing $x$ by $2x$ in~\eqref{Lehmer-Monthly-1985-Gen} and rearranging yield
\begin{equation}
\begin{aligned}\label{Lehmer-Monthly-1985-expan}
\frac{\arcsin(2x)}{2x}\frac{1}{\sqrt{1-4x^2}\,}
&=\frac{1}{8x^2}\frac{4x\arcsin(2x)}{\sqrt{1-4x^2}\,}\\
&=\frac{1}{8x^2}\sum_{\ell=1}^{\infty}\frac{(4x)^{2\ell}}{\ell\binom{2\ell}{\ell}}\\
&=\frac{1}{8x^2}\sum_{n=0}^{\infty}\frac{(4x)^{2(n+1)}}{(n+1)\binom{2(n+1)}{n+1}}\\
&=\sum_{n=0}^{\infty}\frac{2^{4n+1}}{(n+1)\binom{2(n+1)}{n+1}}x^{2n}
\end{aligned}
\end{equation}
for $|x|<\frac12$. Comparing~\eqref{arcsin-sqrt-cauchy-prod} with~\eqref{Lehmer-Monthly-1985-expan} and equating coefficients of $x^{2n}$, we obtain
\begin{equation*}
\sum_{k=0}^n \frac{1}{2k+1}\binom{2k}{k}\binom{2(n-k)}{n-k}
=\frac{2^{4n+1}}{(n+1)\binom{2(n+1)}{n+1}}
=\frac{2^{4n}}{(2n+1)\binom{2n}{n}}.
\end{equation*}
The identity~\eqref{Catalan-Chen-ID} is proved again.
The second proof of Theorem~\ref{Catalan-Chen-ID-thm} is complete.
\end{proof}

\section{Three possibly new combinatorial identities}

In this section, by virtue of those three series expansions~\eqref{arcsin-power-series}, \eqref{Lehmer-Monthly-1985-arcsin-square-expan}, and~\eqref{arcsin-cubic-series-expan}, we establish three possibly new combinatorial identities involving the ratio $\frac{\binom{2k}{k}}{\binom{2(n-k+1)}{n-k+1}}$ in terms of the trigamma function $\psi'\bigl(n+\frac{3}{2}\bigr)$, where $\psi(x)$ is the digamma function defined by the logarithmic derivative $\psi(x)=[\ln\Gamma(x)]'=\frac{\Gamma'(x)}{\Gamma(x)}$ of the classical Euler gamma function
\begin{equation*}
\Gamma(z)=\int_0^{\infty}t^{z-1}e^{-t}\td t, \quad \Re(z)>0.
\end{equation*}
For more information on the gamma function $\Gamma(x)$ and polygamma functions $\psi^{(k)}(x)$ for $k\ge0$, please refer to~\cite[pp.~255--293, Chapter~6]{abram} or the papers~\cite{convex-bern-GF-S.tex, Ouimet-LCM-BKMS.tex} and closely related references therein.

\begin{thm}\label{3identities-ratio=thm}
For $n\ge0$, we have
\begin{gather}\label{CBC-ratio-thm1}
\sum_{k=0}^{n}\frac{1}{2^{4k}(2k+1)(n-k+1)^2} \frac{\binom{2k}{k}}{\binom{2(n-k+1)}{n-k+1}}
=\frac{3[(2n+1)!!]^2}{2^{2n+3}(2n+3)!} \biggl[\pi^2-2\psi'\biggl(n+\frac{3}{2}\biggr)\biggr],\\
\sum_{k=0}^{n}\frac{1}{2^{4k}(n-k+1)^2}\frac{\binom{2k}{k}}{\binom{2(n-k+1)}{n-k+1}}
=\frac{[(2n+1)!!]^2}{2^{2n+3}(2n+2)!}\biggl[\pi^2-2\psi'\biggl(n+\frac{3}{2}\biggr)\biggr],
\label{CBC-ratio-thm2}
\end{gather}
and
\begin{equation}\label{CBC-ratio-thm3}
\sum_{k=0}^{n}\frac{1}{2^{4k}(2k+1)(n-k+1)} \frac{\binom{2k}{k}}{\binom{2(n-k+1)}{n-k+1}}
=\frac{[(2n+1)!!]^2}{2^{2n+3}(2n+2)!} \biggl[\pi^2-2\psi'\biggl(n+\frac{3}{2}\biggr)\biggr].
\end{equation}
\end{thm}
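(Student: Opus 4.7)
The plan is to exploit the three series expansions~\eqref{arcsin-power-series}, \eqref{Lehmer-Monthly-1985-arcsin-square-expan}, and~\eqref{arcsin-cubic-series-expan} by recognizing either $(\arcsin x)^3$ or its derivative $\frac{3(\arcsin x)^2}{\sqrt{1-x^2}\,}$ as a Cauchy product of two known series, and then equating coefficients of $x^{2n+3}$ or $x^{2n+2}$. The first preliminary is to convert the inner sum $\sum_{k=0}^{n}\frac{1}{(2k+1)^2}$ appearing in~\eqref{arcsin-cubic-series-expan} into the trigamma expression on the right-hand sides: from $\psi'\bigl(\frac{1}{2}\bigr)=\frac{\pi^2}{2}$ and the recurrence $\psi'(x+1)=\psi'(x)-\frac{1}{x^2}$, iteration yields $\psi'\bigl(n+\frac{3}{2}\bigr)=\frac{\pi^2}{2}-4\sum_{k=0}^{n}\frac{1}{(2k+1)^2}$, so that $\sum_{k=0}^{n}\frac{1}{(2k+1)^2}=\frac{1}{8}\bigl[\pi^2-2\psi'\bigl(n+\frac{3}{2}\bigr)\bigr]$, which accounts for the bracketed factor in every right-hand side.

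For~\eqref{CBC-ratio-thm1}, I would write $(\arcsin x)^3=\arcsin x\cdot(\arcsin x)^2$ and form the Cauchy product of~\eqref{arcsin-power-series} with~\eqref{Lehmer-Monthly-1985-arcsin-square-expan}. After the substitution $\ell=n-k+1$, the coefficient of $x^{2n+3}$ becomes $2^{2n+1}$ times the sum on the left-hand side of~\eqref{CBC-ratio-thm1}. Comparing with the coefficient of $x^{2n+3}$ in~\eqref{arcsin-cubic-series-expan} and applying the trigamma identity produces~\eqref{CBC-ratio-thm1} after tidying the powers of $2$ and the factorials against $(2n+1)!!$.

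For~\eqref{CBC-ratio-thm2} and~\eqref{CBC-ratio-thm3}, the common starting point is the differential identity $\frac{(\arcsin x)^2}{\sqrt{1-x^2}\,}=\frac{1}{3}\frac{\td}{\td x}(\arcsin x)^3$; termwise differentiation of~\eqref{arcsin-cubic-series-expan} supplies the coefficient of $x^{2n+2}$ on the right and already encodes the trigamma combination. Then~\eqref{CBC-ratio-thm2} follows from also writing this function as $\frac{1}{\sqrt{1-x^2}\,}\cdot(\arcsin x)^2$, using the binomial series $(1-x^2)^{-1/2}=\sum_{k\ge 0}\frac{1}{2^{2k}}\binom{2k}{k}x^{2k}$ paired with~\eqref{Lehmer-Monthly-1985-arcsin-square-expan}, whereas~\eqref{CBC-ratio-thm3} follows from the alternative factorization $\frac{\arcsin x}{x}\cdot\frac{x\arcsin x}{\sqrt{1-x^2}\,}$, using~\eqref{arcsin-power-series} for the first factor and~\eqref{Lehmer-Monthly-1985-Gen} for the second. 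In each case, setting $\ell=n-k+1$ in the Cauchy product and equating the coefficient of $x^{2n+2}$ with that coming from the differentiated cubic series yields the stated identity.

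The only real obstacle is careful bookkeeping of the many powers of $2$ against $2^{2k}$ in the binomial series and against $2^{2\ell-1}$ coming from the $(2x)^{2\ell}$ in~\eqref{Lehmer-Monthly-1985-arcsin-square-expan} and~\eqref{Lehmer-Monthly-1985-Gen}; once these are aligned, the trigamma substitution converts coefficient matching directly into the three identities. The guiding idea is simply that $(\arcsin x)^3$ (or its derivative) admits three natural factorizations into pairs of the elementary series at our disposal, and each factorization produces exactly one of the three sums on the left-hand sides.
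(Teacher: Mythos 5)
Your proposal is correct and follows essentially the same route as the paper: the paper likewise forms the three Cauchy products $\arcsin x\cdot(\arcsin x)^2$, $(\arcsin x)^2\cdot(1-x^2)^{-1/2}$, and $\arcsin x\cdot\frac{\arcsin x}{\sqrt{1-x^2}\,}$, compares them with~\eqref{arcsin-cubic-series-expan} and its once-differentiated form, and converts $\sum_{k=0}^{n}\frac{1}{(2k+1)^2}$ by the same trigamma relation~\eqref{2k+1-harm-trigamma}. Incidentally, your power-of-two bookkeeping (the coefficient of $x^{2n+3}$ being $2^{2n+1}$ times the sum) is the correct one; the paper's intermediate display for $(\arcsin x)^3$ and the identity~\eqref{CBC-ratio-1} carry the factor $2^{2(n-2k)-1}$ where it should read $2^{2(n-2k)+1}$, although the final identities~\eqref{CBC-ratio-thm1}--\eqref{CBC-ratio-thm3} agree with what your computation gives.
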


\begin{proof}
Differentiating on both sides of~\eqref{arcsin-cubic-series-expan} gives
\begin{equation}\label{arcsin-cubic-ser-expan-der}
\frac{(\arcsin x)^2}{\sqrt{1-x^2}\,}=2!\sum_{n=0}^{\infty}[(2n+1)!!]^2\Biggl[\sum_{k=0}^{n}\frac{1}{(2k+1)^2}\Biggr]\frac{x^{2n+2}}{(2n+2)!}, \quad |x|<1.
\end{equation}
On the other hand, we have
\begin{align*}
(\arcsin x)^3&=(\arcsin x)(\arcsin x)^2\\
&=\Biggl[\sum_{k=0}^{\infty}\frac{1}{2^{2k}}\binom{2k}{k}\frac{x^{2k+1}}{2k+1}\Biggr] \Biggl[\frac{1}{2}\sum_{m=1}^{\infty}\frac{(2x)^{2m}}{m^2\binom{2m}{m}}\Biggr]\\
&=\frac{x^3}{2}\Biggl[\sum_{k=0}^{\infty}\frac{1}{(2k+1)2^{2k}}\binom{2k}{k}x^{2k}\Biggr] \Biggl[\sum_{k=0}^{\infty}\frac{2^{2k}}{(k+1)^2\binom{2(k+1)}{k+1}}x^{2k}\Biggr]\\
&=\frac{x^3}{2}\sum_{n=0}^{\infty}\Biggl[\sum_{k=0}^{n}\frac{1}{(2k+1)2^{2k}}\binom{2k}{k} \frac{2^{2(n-k)}}{(n-k+1)^2\binom{2(n-k+1)}{n-k+1}}\Biggr]x^{2n}\\
&=\sum_{n=0}^{\infty}\Biggl[\sum_{k=0}^{n}\frac{2^{2(n-2k)-1}}{(2k+1)(n-k+1)^2} \frac{\binom{2k}{k}}{\binom{2(n-k+1)}{n-k+1}}\Biggr]x^{2n+3},\\
\frac{(\arcsin x)^2}{\sqrt{1-x^2}\,}&=(\arcsin x)^2\frac{1}{\sqrt{1-x^2}\,}\\
&=\Biggl[\frac{1}{2}\sum_{m=1}^{\infty}\frac{(2x)^{2m}}{m^2\binom{2m}{m}}\Biggr] \Biggl[\sum_{n=0}^{\infty}\frac{1}{2^{2n}}\binom{2n}{n}x^{2n}\Biggr]\\
&=x^2\Biggl[\sum_{k=0}^{\infty}\frac{2^{2k+1}}{(k+1)^2\binom{2(k+1)}{k+1}}x^{2k}\Biggr] \Biggl[\sum_{k=0}^{\infty}\frac{1}{2^{2k}}\binom{2k}{k}x^{2k}\Biggr]\\
&=\sum_{n=0}^{\infty}\Biggl[\sum_{k=0}^{n}\frac{1}{2^{2k}}\binom{2k}{k} \frac{2^{2(n-k)+1}}{(n-k+1)^2\binom{2(n-k+1)}{n-k+1}}\Biggr]x^{2n+2}\\
&=\sum_{n=0}^{\infty}\Biggl[\sum_{k=0}^{n}\frac{2^{2(n-2k)+1}}{(n-k+1)^2}\frac{\binom{2k}{k}}{\binom{2(n-k+1)}{n-k+1}}\Biggr]x^{2n+2},
\end{align*}
and
\begin{align*}
\frac{(\arcsin x)^2}{\sqrt{1-x^2}\,}&=(\arcsin x)\frac{\arcsin x}{\sqrt{1-x^2}\,}\\
&=\Biggl[\sum_{k=0}^{\infty}\frac{1}{2^{2k}}\binom{2k}{k}\frac{x^{2k+1}}{2k+1}\Biggr] \Biggl[\frac{1}{2x}\sum_{m=1}^{\infty}\frac{(2x)^{2m}}{m\binom{2m}{m}}\Biggr]\\
&=\Biggl[\sum_{k=0}^{\infty}\frac{1}{2^{2k}(2k+1)}\binom{2k}{k}x^{2k}\Biggr] \Biggl[\sum_{k=0}^{\infty}\frac{2^{2k+1}}{(k+1)\binom{2(k+1)}{k+1}}x^{2(k+1)}\Biggr]\\
&=\sum_{n=0}^{\infty}\Biggl[\sum_{k=0}^{n}\frac{1}{2^{2k}(2k+1)}\binom{2k}{k} \frac{2^{2(n-k)+1}}{(n-k+1)\binom{2(n-k+1)}{n-k+1}}\Biggr]x^{2n+2}\\
&=\sum_{n=0}^{\infty}\Biggl[\sum_{k=0}^{n}\frac{2^{2(n-2k)+1}}{(2k+1)(n-k+1)} \frac{\binom{2k}{k}}{\binom{2(n-k+1)}{n-k+1}}\Biggr]x^{2n+2},
\end{align*}
where we used the power series expansions~\eqref{arcsin-power-series}, \eqref{Lehmer-Monthly-1985-arcsin-square-expan}, and~\eqref{Lehmer-Monthly-1985-Gen}. Equating the above three power series expansions with series expansions~\eqref{arcsin-cubic-series-expan} and~\eqref{arcsin-cubic-ser-expan-der} respectively reveals
\begin{gather}\label{CBC-ratio-1}
\frac{3![(2n+1)!!]^2}{(2n+3)!}\Biggl[\sum_{k=0}^{n}\frac{1}{(2k+1)^2}\Biggr]
=\sum_{k=0}^{n}\frac{2^{2(n-2k)-1}}{(2k+1)(n-k+1)^2} \frac{\binom{2k}{k}}{\binom{2(n-k+1)}{n-k+1}},\\
\frac{2![(2n+1)!!]^2}{(2n+2)!}\Biggl[\sum_{k=0}^{n}\frac{1}{(2k+1)^2}\Biggr]
=\sum_{k=0}^{n}\frac{2^{2(n-2k)+1}}{(n-k+1)^2}\frac{\binom{2k}{k}}{\binom{2(n-k+1)}{n-k+1}},
\label{CBC-ratio-2}
\end{gather}
and
\begin{equation}\label{CBC-ratio-3}
\frac{2![(2n+1)!!]^2}{(2n+2)!}\Biggl[\sum_{k=0}^{n}\frac{1}{(2k+1)^2}\Biggr]
=\sum_{k=0}^{n}\frac{2^{2(n-2k)+1}}{(2k+1)(n-k+1)} \frac{\binom{2k}{k}}{\binom{2(n-k+1)}{n-k+1}}.
\end{equation}
From the formula
\begin{equation*}
\psi'\biggl(\frac{1}{2}+n\biggr)=\frac{\pi^2}{2}-4\sum_{k=1}^{n}\frac{1}{(2k-1)^2}, \quad n\in\mathbb{N}
\end{equation*}
in~\cite[p.~914, 8.366]{Gradshteyn-Ryzhik-Table-8th}, we derive that
\begin{equation}\label{2k+1-harm-trigamma}
\sum_{k=0}^{n}\frac{1}{(2k+1)^2}=\frac{1}{8} \biggl[\pi^2-2 \psi'\biggl(n+\frac{3}{2}\biggr)\biggr].
\end{equation}
Substituting the formula~\eqref{2k+1-harm-trigamma} into~\eqref{CBC-ratio-1}, \eqref{CBC-ratio-2}, and~\eqref{CBC-ratio-3} and simplifying lead to three identities~\eqref{CBC-ratio-thm1}, \eqref{CBC-ratio-thm2}, and~\eqref{CBC-ratio-thm3} respectively.
The proof of Theorem~\ref{3identities-ratio=thm} is thus complete.
\end{proof}

\section{Remarks}

Finally, we list several remarks on our main results and related stuffs.

\begin{rem}
The identity~\eqref{Catalan-Chen-ID} in Theorem~\ref{Catalan-Chen-ID-thm} can be regarded as a couple of the identity
\begin{equation}\label{Monthly-P11897-ID-final}
\sum_{k=0}^{n}\frac{1}{k+1}\binom{2k}{k}\binom{2(n-k)}{n-k}
=\binom{2n+1}{n}, \quad n\ge0,
\end{equation}
which is a special case of the identity~\cite[p.~77, (3.95)]{Sprugnoli-Gould-2006}. Moreover, the identity
\begin{equation}\label{Monthly-P11897-ID}
\sum_{\substack{k+\ell=n,\\ k\ge0,\ell\ge0}}\frac{1}{k+1}\binom{2k}{k}\binom{2(\ell+1)}{\ell+1}=2\binom{2n+2}{n},\quad n\ge0,
\end{equation}
which has been proved in~\cite{Monthly-P11897} by three alternative and different methods, is an equivalence of the identity~\eqref{Monthly-P11897-ID-final}. This equivalence can be demonstrated as follows.
\par
The identity~\eqref{Monthly-P11897-ID} can be rearranged as
\begin{equation*}
\sum_{k=0}^n\frac{1}{k+1}\binom{2k}{k}\binom{2(n-k+1)}{n-k+1}=2\binom{2n+2}{n}
\end{equation*}
which is equivalent to
\begin{equation*}
\sum_{k=0}^{n+1}\frac{1}{k+1}\binom{2k}{k}\binom{2(n-k+1)}{n-k+1}
=2\binom{2n+2}{n}+\frac{1}{n+2}\binom{2(n+1)}{n+1}=\binom{2n+3}{n+1},
\end{equation*}
where we used $\binom{0}{0}=1$. Replacing $n+1$ by $n$ in the last identity leads to the identity~\eqref{Monthly-P11897-ID-final}.
\end{rem}

\begin{rem}
Closely related to central binomial coefficients $\binom{2n}{n}$, the Catalan numbers
\begin{equation}\label{Catalan-1Exp}
C_n=\frac{1}{n+1}\binom{2n}{n}
\end{equation}
in combinatorial number theory have attracted many mathematicians who have published several monographs~\cite{Grimaldi-B2012, Koshy-B-2009, Roman-Catalan-B2015, Stanley-Catalan-2015} and a number of papers such as~\cite{Catalan-Li-Qi-Kouba.tex, Mathematics-129120.tex, Ars-Comb-Catalan-Asymp-Qi.tex, Fuss-Cat-Qi-Cer.tex, Catalan-GF-Plus.tex, Filomat-5244.tex, JAAC961.tex, 195-2017-JOCAAA.tex, Kims-Rus-Catalan.tex, AADM-3031.tex}.
\par
The second conclusion~(b) in~\cite[Lemma~2]{Alzer-Nagy-Integers-2020} reads that
\begin{equation}\label{Alzer-Nagy-Lem2(b)}
\sum_{k=0}^n B_k C_{n-k}=\frac12 B_{n+1},
\end{equation}
where $B_n=\binom{2n}n$.
Rewriting the sum in~\eqref{Alzer-Nagy-Lem2(b)} as $\sum_{k=0}^n B_{n-k}C_k$ and substituting $\binom{2n-2k}{n-k}$ and $\frac{1}{k+1}\binom{2k}{k}$ for $B_{n-k}$ and  $C_k$ result in
\begin{equation*}
\sum_{k=0}^n \frac{1}{k+1}\binom{2k}{k}\binom{2(n-k)}{n-k}C_k=\frac12\binom{2(n+1)}{n+1}=\binom{2n+1}{n}
\end{equation*}
which is the same as the identity~\eqref{Monthly-P11897-ID-final}.
\par
By the way, the combinatorial proof of the identity~\eqref{Alzer-Nagy-Lem2(b)} in~\cite[Lemma~2]{Alzer-Nagy-Integers-2020} is longer than the combinatorial proof of the identity~\eqref{Monthly-P11897-ID} in~\cite{Monthly-P11897}, while its equivalent identities~\eqref{Monthly-P11897-ID-final} and~\eqref{Monthly-P11897-ID} were proved analytically in~\cite{Monthly-P11897} and~\cite[p.~77, (3.95)]{Sprugnoli-Gould-2006}.
\end{rem}

\begin{rem}
By the formula~\eqref{Catalan-1Exp}, we can rewritten the identity~\eqref{Monthly-P11897-ID-final} and those in Theorem~\ref{Catalan-Chen-ID-thm} and Theorem~\ref{3identities-ratio=thm} as
\begin{align*}
\sum_{k=0}^{n}(n-k+1)C_kC_{n-k}&=\binom{2n+1}{n},\\
\sum_{k=0}^{n}\frac{(k+1)(n-k+1)}{2k+1}C_kC_{n-k}&=\frac{2^{4n}}{(2n+1)(n+1)C_n},\\
\sum_{k=0}^{n}\frac{n-k+2}{2^{4k}(k+1)(2k+1)(n-k+1)^2}\frac{C_k}{C_{n-k+1}}
&=\frac{3[(2n+1)!!]^2}{2^{2n}(2n+3)!}\sum_{k=0}^{n}\frac{1}{(2k+1)^2},\\
\sum_{k=0}^{n}\frac{n-k+2}{2^{4k}(k+1)(n-k+1)^2}\frac{C_k}{C_{n-k+1}}
&=\frac{[(2n+1)!!]^2}{2^{2n}(2n+2)!}\sum_{k=0}^{n}\frac{1}{(2k+1)^2},
\end{align*}
and
\begin{equation*}
\sum_{k=0}^{n}\frac{n-k+2}{2^{4k}(k+1)(2k+1)(n-k+1)}\frac{C_k}{C_{n-k+1}}
=\frac{[(2n+1)!!]^2}{2^{2n}(2n+2)!}\sum_{k=0}^{n}\frac{1}{(2k+1)^2}
\end{equation*}
respectively. For more information on series involving the Catalan numbers $C_n$, please refer to the paper~\cite{MTJPAM-D-19-00007.tex} and closely related references therein.
\end{rem}

\begin{rem}
This paper is a revised version of the arXiv preprint~\cite{Catalan-Chen-Id.tex}.
\end{rem}

\subsection*{Acknowledgements}
The authors thank
\begin{enumerate}
\item
Nguyen Xuan Tho (Hanoi University of Science and Technology, Vietnam) for providing the reference~\cite{Monthly-P11897} on 9 November 2020;
\item
Mikhail Kalmykov (kalmykov.mikhail@googlemail.com) for providing the papers~\cite{Davydychev-Kalmykov-2004, Kalmykov-Sheplyakov-lsjk-2005, Kalmykov-Sheplyakov-2001} on 9 January 2021;
\item
anonymous referees for pointing out or reminding of the papers~\cite{Alzer-Nagy-Integers-2020, Sprugnoli-Gould-2006, Witula-IJPAM-2013}.
\end{enumerate}

\end{document}